\newcommand{\bA}{\ensuremath{\mathbf{A}}}
\newcommand{\bx}{\ensuremath{\mathbf{x}}}
\newcommand{\by}{\ensuremath{\mathbf{y}}}
\newcommand{\bb}{\ensuremath{\mathbf{b}}}
\newcommand{\0}{\ensuremath{^{0}}}
\newcommand{\s}{\ensuremath{^{s}}}
\newcommand{\bN}{\ensuremath{\mathbf{N}}}
\newcommand{\bM}{\ensuremath{\mathbf{M}}}
\newcommand{\bR}{\ensuremath{\mathbf{R}}}
\newcommand{\bH}{\ensuremath{\mathbf{H}}}
\newcommand{\bB}{\ensuremath{\mathbf{B}}}
\newcommand{\bD}{\ensuremath{\mathbf{D}}}
\newcommand{\bV}{\ensuremath{\mathbf{V}}}
\newcommand{\bL}{\ensuremath{\boldsymbol{\Lambda}}}
\newcommand{\alert}[1]{{\color{black} #1}}
\begin{document}

\title*{Toward a new fully algebraic preconditioner for symmetric positive definite problems}
\author{Nicole Spillane}
\institute{Nicole Spillane \at CNRS, CMAP, Ecole Polytechnique, Institut Polytechnique de Paris, 91128 Palaiseau Cedex, France, \email{nicole.spillane@cmap.polytechnique.fr}}
%
%
\maketitle

\abstract*{}

\abstract{A new domain decomposition preconditioner is introduced for efficiently solving linear systems $\bA \bx = \bb$ with a symmetric positive definite matrix $\bA$. The particularity of the new preconditioner is that it is not necessary to have access to the so-called \textit{Neumann} matrices (\textit{i.e.}: the matrices that result from assembling the variational problem underlying $\bA$ restricted to each subdomain). All the components in the preconditioner can be computed with the knowledge only of $\bA$ (and this is the meaning given here to the word \textit{algebraic}). The new preconditioner relies on the GenEO coarse space for a matrix that is a low-rank modification of $\bA$ and on the Woodbury matrix identity. The idea underlying the new preconditioner is introduced here for the first time with a first version of the preconditioner. Some numerical illustrations are presented. \alert{A more extensive presentation including some improved variants of the new preconditioner can be found in \cite{AlgebraicGenEO}.}} 
 
\section{Introduction}
\label{sec:intro}
We set out to solve the linear system 
$\bA\bx_* = \bb$, 
for a given symmetric positive definite (spd) matrix \alert{$\bA \in \mathbb R^{n\times n}$. } 
There exist a variety of two-level methods for which \alert{fast} convergence is guaranteed without making assumptions on the number of subdomains, their shape, or the distribution of the coefficients in the underlying PDE (see \textit{e.g} \cite{mandel2007adaptive,efendiev2012robust,spillane2013abstract,SPILLANE:2013:FETI_GenEO_IJNME,haferssas2017additive,klawonn2016adaptive,pechstein2017unified,gander2017shem,zampini2016pcbddc,yu2020additive,Dolean:ATL:2011}). These methods have in common to select vectors for the coarse space by computing low- or high-frequency eigenvectors of \alert{well-chosen generalized eigenvalue problems (of the form $\bM_A \by = \lambda \bM_B \by$) posed in the subdomains. To the best of the author's knowledge, none of these methods can be applied if the so-called local $\textit{Neumann}$ matrices are not known. Specifically,  the definition of either $\bM_A$ or $\bM_B$ is based on a family of symmetric positive semi-definite (spsd) matrices $\bN\s$ that satisfy
\begin{equation}
\label{eq:splitting}
\exists C>0, \text{ such that } \sum_{s=1}^N \bx^\top  {\bR\s}^\top \bN\s \bR\s \bx \leq C \, \bx^\top \bA \bx; \, \forall \, \bx \in \bR^n, 
\end{equation}
where it has been assumed that there are $N$ subdomains with restriction operators $\bR\s$.}
The Neumann matrices are a natural choice for $\bN \s$ and the above estimate then holds with constant $C$ equal to the maximal multiplicity of a mesh element. This limitation is very well known (and stated clearly in \textit{e.g.:}, \cite{agullo2019robust,aldaas:hal-01963067}). 

In this work, it is proposed to relax the assumptions on \alert{the matrices $\bN \s$ in \eqref{eq:splitting} by allowing them to be symmetric (but not necessarily positive semi-definite)}. Such matrices $\bN\s$, \alert{then denoted $\bB\s$,} can always be defined \alert{algebraically}. Special treatment must be applied to the non-positive part of $\bB\s$ and this will be reflected in the cost of setting up and applying the preconditioner. In Section~\ref{sec:Theory}, the new preconditioner is defined and the result on the condition number is given. In Section~\ref{sec:Numerical}, some preliminary numerical illustrations are provided. Finally, Section~\ref{sec:Conclusion} offers up some conclusive remarks about the new preconditioner, as well as some of its current limitations that are addressed in the full length article \cite{AlgebraicGenEO}.  

\section{Definition of the new preconditioner and theory}
\label{sec:Theory}

\alert{
This section introduces the new preconditioner $\bH(\tau)$ and proves the resulting bound for the condition number of $\bH(\tau) \bA$. The methodology is as follows. In Subsection~\ref{sub:alg-DD}, some elements of the abstract Schwarz setting are defined in their algebraic form. Then, in Subsection~\ref{subs:def-of-A+}, a new matrix $\bA_+$ is introduced for which an algebraic splitting into spsd matrices is available by construction (\textit{i.e.}, \eqref{eq:splitting} is satisfied). The availability of this splitting makes it possible to apply the abstract GenEO theory \cite{spillane:hal-03186276} to choose a coarse space. Hence, in Subsection~\ref{subs:def-H+}, a two-level preconditioner $\bH_+(\tau)$, with a GenEO coarse space parametrized by a threshold $\tau$, is defined for $\bA_+$. The spectral bound for $\bH_+(\tau) \bA_+$ is given. Finally in Subsection~\ref{subs:def-of-H}, the Woodbury matrix identity \cite{woodbury1950inverting} is applied to find a formula for $\bA ^{-1} - \bA_+^{-1}$ and this (provably low-rank) term is added to $\bH_+(\tau)$ in order to form the new preconditioner $\bH(\tau)$ for $\bA$.  A spectral bound for $\bH(\tau)\bA$ follows.}

\alert{
\subsection{Algebraic Domain Decomposition}
\label{sub:alg-DD}
Let $\Omega = \llbracket 1, n \rrbracket$ be the set of all indices in $\mathbb R^n$. In all that follows, it is assumed that $\Omega$ has been partitioned into a family of subdomains $\left( \Omega\s \right)_{s=1,\dots,N}$ and that the partition has minimal overlap in the sense given by Definition~\ref{def:partition}. The usual restriction operators are also defined.
 
\begin{definition}
\label{def:partition}
A set $\left( \Omega\s \right)_{s=1,\dots,N}$ of $N\in \mathbb N$ subsets of $\Omega = \llbracket 1, n \rrbracket$ is called a partition of $\Omega$ if $\Omega = \bigcup_{s=1}^N \Omega\s$. Each $\Omega\s$ is called a subdomain.  
The partition is said to have at least minimal overlap if: for any pair of indices $(i,j) \in \llbracket 1, n \rrbracket^2$, denoting by $A_{ij}$ the coefficient of $\bA$ at the $i$-th line and $j$-th column, 
\[
A_{ij} \neq 0 \Rightarrow \left( \exists \, s \in \llbracket 1, N \rrbracket \text{ such that } \{i,j\} \subset \Omega\s \right). 
\]
Moreover, for each $s \in \llbracket 1,N \rrbracket$, let $n \s$ be the cardinality of $\Omega\s$. Finally, let the restriction matrix $\bR\s \in \mathbb R^{n\s \times n}$ be zero everywhere except for the block formed by the columns in $\Omega\s$ which is the $n\s\times n\s$ identity matrix. 
\end{definition}
}

\subsection{Definition of $\bA_+$ and related operators}
\label{subs:def-of-A+}

\alert{The starting point for the algebraic preconditioner is to relax condition \eqref{eq:splitting} by allowing symmetric, but possibly indefinite, matrices in the splitting of $\bA$. 

\begin{definition}
Let $\bB \in \mathbb R^{n \times n}$ be the matrix whose $(i,j)$-th entry is 
\[
B_{ij} := \left\{ \begin{array}{cl}
  \frac{A_{ij}}{\# \{s; \{i,j\} \subset \Omega\s\}} &\text{ if } A_{ij} \neq 0,
\\
0  &\text{ otherwise}.
\end{array} \right.
\] 
Then, for each $s=1, \dots, N$, let 
$\bB\s := \bR\s \bB {\bR\s}^\top \quad (\in \mathbb R^{n\s \times n\s})$.  
\end{definition}

\begin{theorem}
Thanks to the minimal overlap assumption, the symmetric matrices $\bB\s$ are well-defined and satisfy 
$\bA = \sum_{s=1}^N {\bR\s}^\top \bB\s \bR\s$.
\end{theorem}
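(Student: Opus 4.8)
The plan is to verify the two claims separately: first that each $\bB\s$ is well-defined (and symmetric), and then that the decomposition $\bA = \sum_{s=1}^N {\bR\s}^\top \bB\s \bR\s$ holds entry by entry. Symmetry of $\bB$ is immediate from the symmetry of $\bA$: the definition of $B_{ij}$ depends on $(i,j)$ only through $A_{ij}$ and through the (symmetric) set $\{s; \{i,j\} \subset \Omega\s\}$, so $B_{ij} = B_{ji}$; consequently each $\bB\s = \bR\s \bB {\bR\s}^\top$ is symmetric as well. What needs care is that $\bB$ is well-defined at all: the denominator $\#\{s; \{i,j\} \subset \Omega\s\}$ must be nonzero whenever $A_{ij} \neq 0$. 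This is exactly where the minimal overlap assumption enters: by Definition~\ref{def:partition}, $A_{ij} \neq 0$ implies there exists at least one $s$ with $\{i,j\} \subset \Omega\s$, so the denominator is a positive integer and the division makes sense.

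Next I would unwind the product ${\bR\s}^\top \bB\s \bR\s = {\bR\s}^\top \bR\s \bB {\bR\s}^\top \bR\s$. The matrix ${\bR\s}^\top \bR\s \in \mathbb R^{n \times n}$ is the diagonal $0/1$ projection onto the coordinates in $\Omega\s$, so ${\bR\s}^\top \bB\s \bR\s$ is simply the matrix obtained from $\bB$ by zeroing out every entry $(i,j)$ for which $\{i,j\} \not\subset \Omega\s$. Therefore, fixing a pair $(i,j)$, the $(i,j)$-th entry of $\sum_{s=1}^N {\bR\s}^\top \bB\s \bR\s$ equals $\sum_{s : \{i,j\} \subset \Omega\s} B_{ij}$, i.e. $B_{ij}$ multiplied by the integer $\#\{s; \{i,j\} \subset \Omega\s\}$.

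Finally I would conclude by case analysis on whether $A_{ij} = 0$. If $A_{ij} \neq 0$, then $B_{ij} = A_{ij} / \#\{s; \{i,j\} \subset \Omega\s\}$, and multiplying back by that multiplicity recovers exactly $A_{ij}$. If $A_{ij} = 0$, then $B_{ij} = 0$ and the sum contributes $0 = A_{ij}$; note that here one does not even need the multiplicity to be nonzero, since $0$ times anything (or an empty sum) is $0$. In both cases the $(i,j)$-th entry of $\sum_{s=1}^N {\bR\s}^\top \bB\s \bR\s$ equals $A_{ij}$, which proves the identity.

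The argument is essentially bookkeeping, so there is no real obstacle; the only point that genuinely requires the hypothesis — and hence the only thing worth stating carefully — is that the minimal overlap assumption guarantees the denominator in the definition of $B_{ij}$ is nonzero precisely on the support of $\bA$, so that $\bB$ (and each $\bB\s$) is well-defined.
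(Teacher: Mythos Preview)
Your argument is correct and complete: the minimal overlap assumption is exactly what makes the denominator in $B_{ij}$ a positive integer on the support of $\bA$, and the entrywise verification of $\sum_{s=1}^N {\bR\s}^\top \bB\s \bR\s = \bA$ goes through as you wrote it. The paper does not actually prove this theorem in-line; it simply cites an external reference (\cite{AlgebraicGenEO}, Theorem~3.2), so your proposal supplies precisely the elementary bookkeeping argument that the paper omits.
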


The proof is given in \cite{AlgebraicGenEO}[Theorem 3.2]. In particular, \eqref{eq:splitting} holds with $\bN\s = \bB\s$ and $C=1$. Next, each $\bB\s$ is split into a spsd and a symmetric negative semi-definite part. 
}
\alert{
\begin{definition}
Let $s\in \llbracket 1, N \rrbracket$. Since $\bB\s$ is symmetric, there exist a diagonal matrix $\bL\s$ and an orthogonal matrix $\bV\s$ such that $\bB\s = \bV\s \bL\s {\bV\s}^\top$. It can further be assumed that the diagonal entries of $\bL\s$ (which are the eigenvalues of $\bB\s$) are sorted in non-decreasing order and that 
\[
\bL\s = \begin{pmatrix} \bL\s_- & \mathbf{0} \\ \mathbf{0} & \bL\s_+ \end{pmatrix}, \quad \bV\s = \left[\bV\s_- | \bV\s_+ \right], \quad \bL\s_+ \text { is spd}, \quad -\bL\s_- \text{ is spsd} . 
\]  
Finally, let 
\[
\bA\s_+ := \bV\s_+ \bL\s_+ {\bV\s_+}^\top  \text{ and } \bA\s_- := - \bV\s_- \bL\s_- {\bV\s_-}^\top . 
\]
\end{definition}

With words, the positive (respectively, non-positive) eigenvalues of $\bB\s$ are on the diagonal of $\bL\s_+$ (respectively, $\bL\s_-$) and the corresponding eigenvectors are in the columns of $ \bV\s_+$ (respectively, $\bV\s_-$). It is also clear that
\[
\bB\s = \bA\s_+ - \bA\s_-, \quad \bA\s_+ \text{ is spsd, and } \bA\s_- \text{ is spsd.}   
\]
In the next definition, these new local matrices are assembled into global matrices and in particular the all important matrix $\bA_+$ is defined. 

\begin{definition}
Let $\bA_+$ and $\bA_-$ be the two matrices in $\mathbb R^{n\times n}$ defined by 
\[
\bA_+ := \sum_{s=1}^N {\bR\s}^\top \bA\s_+ \bR\s, \text{ and } \bA_- := \sum_{s=1}^N {\bR\s}^\top \bA\s_- \bR\s.  
\]
\end{definition}
}
\alert{
It is clear that $\bA = (\bA_+ - \bA_-)$ and $\bA_-$ is spsd. As a result, $\bA_+$  is spd .
}
\subsection{Two-level preconditioner for $\bA_+$ with a GenEO coarse space}
\label{subs:def-H+}

Following~\cite{spillane:hal-03186276}, there are many possible choices for a two-level preconditioner for $\bA_+$ with a GenEO coarse space. This is not the novelty here so only one is given with no further comment on other possibilities. 
\alert{
\begin{theorem}
Let $\tau > 1$ be a threshold. Let $\bH_+(\tau)$ be defined by
\[
\bH_+(\tau) := \sum_{s=1}^N {\bR\s}^\top (\bR\s \bA_+   {\bR\s}^\top )\,^{-1} \bR\s +\bR\0(\tau)^\top (\bR\0(\tau) \bA_+ \bR\0(\tau)^\top)^{-1} \bR\0(\tau),
\]
where the lines of $\bR\0(\tau)$ form a basis for the GenEO coarse space $V\0(\tau)$. The coarse space is in turn defined according to \cite{spillane:hal-03186276}[Definition 5] by 
\[
V\0(\tau) := \sum_{s=1}^N \operatorname{span} \left\{{\bR\s}^\top \by\s; (\lambda\s, \by\s) \in \mathbb R^+ \times \mathbb R^{n\s} \text{ solution of \eqref{eq:gevp} and }  \lambda\s < \tau\alert{^{-1}}\right\}.
\]
where the generalized eigenvalue problem is
\begin{equation}
\label{eq:gevp}
(\bD\s)^{-1} \bA_+\s (\bD\s)^{-1} \by\s = \lambda\s \bR\s \bA_+  {\bR\s}^\top \by\s; \text{ for } \bD \s :=  \bR \s \left(\sum_{t=1}^N {\bR^t}^\top  \bR^t \right)^{-1} {\bR \s}^\top. 
\end{equation}
If $\tau>1$ and $\mathcal N_+$ is the minimal number of colors that are needed to color each subdomain in such a way that two subdomains with the same color are $\bA_+$-orthogonal, then the eigenvalues of the preconditioned operator satisfy
\begin{equation}
\label{eq:lambdaH+A+}
\lambda(\bH_+(\tau) \bA_+ ) \in \left[\left((1 + 2 \mathcal N_+)\tau  \right)^{-1}  , \mathcal N_+ + 1\right].
\end{equation}
\end{theorem}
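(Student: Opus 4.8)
The plan is to invoke the abstract GenEO theory of \cite{spillane:hal-03186276} directly, since by construction all of its hypotheses are now met for the matrix $\bA_+$. The first step is to check the structural assumptions: $\bA_+$ is spd (established just above), the $\bR\s$ are the restriction operators of Definition~\ref{def:partition}, and — crucially — the splitting $\bA_+ = \sum_{s=1}^N {\bR\s}^\top \bA\s_+ \bR\s$ with each $\bA\s_+$ spsd is exactly the algebraic splitting \eqref{eq:splitting} with $\bN\s = \bA\s_+$ and constant $C = 1$. So the ``partition of unity'' constant and the stable-splitting constant that appear in the abstract theory both equal $1$ here, which is what collapses the general bound to the clean form \eqref{eq:lambdaH+A+}.

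The second step is the upper bound $\lambda_{\max}(\bH_+(\tau)\bA_+) \le \mathcal N_+ + 1$. This is the standard additive-Schwarz coloring argument and does not use GenEO at all: write $\bH_+(\tau)\bA_+ = \sum_{s=0}^N \bP\s$ where $\bP\s$ is the $\bA_+$-orthogonal projection onto $\operatorname{range}({\bR\s}^\top)$ (for $s \ge 1$) and onto $V\0(\tau)$ (for $s=0$); each $\bP\s$ has $\bA_+$-spectrum in $[0,1]$, the $N$ local ones split into $\mathcal N_+$ groups of mutually $\bA_+$-orthogonal (hence simultaneously bounded by $1$ in $\bA_+$-norm) projections by the definition of $\mathcal N_+$, and the coarse term contributes at most $1$ more. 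Summing gives $\mathcal N_+ + 1$.

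The third step — the lower bound $\lambda_{\min}(\bH_+(\tau)\bA_+) \ge \big((1+2\mathcal N_+)\tau\big)^{-1}$ — is the heart of GenEO and the one place I expect real work. The mechanism is the \emph{stable splitting} estimate: given any $\bx$, one decomposes it as a coarse component in $V\0(\tau)$ plus local components $\bx\s$, where the local components are obtained by applying the partition-of-unity operators $\bD\s$ and then projecting out, in each subdomain, the ``bad'' eigenvectors of \eqref{eq:gevp} — those with $\lambda\s < \tau^{-1}$ — which is precisely what the coarse space $V\0(\tau)$ absorbs. On the remaining (``good'') spectral subspace the generalized Rayleigh quotient obeys $\bR\s\bA_+{\bR\s}^\top \by\s \cdot \by\s \le \tau\, (\bD\s)^{-1}\bA_+\s(\bD\s)^{-1}\by\s \cdot \by\s$, which is exactly the bound needed to control $\sum_s \bR\s\bA_+{\bR\s}^\top \bx\s \cdot \bx\s$ by $\tau$ times something of the form $\sum_s \bA_+\s (\text{P.o.U. of }\bx) \cdot(\text{P.o.U. of }\bx)$; the splitting $\sum_s {\bR\s}^\top\bA\s_+\bR\s = \bA_+$ with $C=1$ then turns that into $\tau\,\bx^\top\bA_+\bx$, up to cross-terms that the coloring number $\mathcal N_+$ controls and that produce the factor $(1 + 2\mathcal N_+)$. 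Invoking \cite{spillane:hal-03186276}[Definition 5 and its accompanying theorem] with $C=1$ and the partition-of-unity constant equal to $1$ packages all of this; the only thing to verify by hand is that the objects $\bD\s$, the eigenproblem \eqref{eq:gevp}, and the coarse space $V\0(\tau)$ as written here coincide with the ones in that reference, after which \eqref{eq:lambdaH+A+} is a direct specialization. I would therefore present the proof as: (i) identify the abstract hypotheses and note $C = 1$; (ii) give the one-line coloring argument for the upper bound; (iii) cite the abstract GenEO lower bound and record how $C=1$ and $\mathcal N_+$ specialize the generic constant to $(1+2\mathcal N_+)\tau$.
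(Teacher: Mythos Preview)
Your proposal is correct and takes essentially the same approach as the paper: the paper's entire proof is the one-line citation ``This is the result in \cite{spillane:hal-03186276}[Remark 3, Corollary 4, Assumption 6],'' and your plan is precisely to verify that the hypotheses of that abstract GenEO framework hold for $\bA_+$ (spd, exact splitting $\bA_+=\sum_s{\bR\s}^\top\bA_+\s\bR\s$ with spsd local pieces giving $C=1$) and then invoke the cited bounds. Your additional sketch of the coloring argument for the upper bound and the stable-splitting mechanism for the lower bound is accurate but goes beyond what the paper itself records; it simply unpacks what the citation encapsulates.
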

\begin{proof}
This is the result in \cite{spillane:hal-03186276}[Remark 3,Corollary 4,Assumption 6].
\end{proof}
}
\subsection{New preconditioner for $\bA$}
\label{subs:def-of-H}

\alert{
\begin{definition}
\label{def:V-L-}
Let $n_- = \operatorname{rank}(\bA_-)$. Let  $\bL_- \in \mathbb R^{n_- \times n_-}$ and $\bV_- \in \mathbb R^{n \times n_-}$ be the diagonal matrix and the orthogonal matrix that are obtained by removing the null part of $\bA_-$ from its diagonalization in such a way that $\bA_- = \bV_- \bL_- \bV_-^\top$ with $\bL_-$ spd. 
\end{definition}
}

It now holds that $\bA = \bA_+ -  \bV_- \bL_- \bV_-^\top $ and the Woodbury matrix identity \cite{woodbury1950inverting} applied to computing the inverse of $\bA$, viewed as a modification of $\bA_+$, gives
\begin{equation}
\label{eq:Wood}
\bA^{-1} = \bA_+^{-1} +  \bA_+^{-1} \bV_- \left(\bL_-^{-1} - \bV_-^\top \bA_+^{-1} \bV_-  \right)^{-1} \bV_-^\top \bA_+^{-1}. 
\end{equation}
This leads to the main theorem in this article in which the new algebraic preconditioner for $\bA$ is defined and the corresponding spectral bound is proved.
\begin{theorem}
For $\tau >1$, let the new preconditioner be defined as 
\[
\bH(\tau) := \bH_+(\tau) + \bA_+^{-1} \bV_- \left(\bL_-^{-1} - \bV_-^\top \bA_+^{-1} \bV_-  \right)^{-1} \bV_-^\top \bA_+^{-1}. 
\]
The eigenvalues of the preconditioned operator satisfy 
\begin{equation}
\label{eq:lambdaHA}
\lambda(\bH(\tau) \bA ) \in \left[\left((1 + 2 \mathcal N_+)\tau  \right)^{-1}  , \mathcal N_+ + 1\right],
\end{equation}
where, once more $ \mathcal N_+$ is the coloring constant with respect to the operator $\bA_+$.
\end{theorem}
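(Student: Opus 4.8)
The plan is to transfer the spectral bound \eqref{eq:lambdaH+A+} from $\bH_+(\tau)\bA_+$ to $\bH(\tau)\bA$, using only the Woodbury identity \eqref{eq:Wood} and the comparison $\bA \preceq \bA_+$. Write $L := \left((1+2\mathcal N_+)\tau\right)^{-1}$ and $U := \mathcal N_+ + 1$ for the two endpoints. First I would record the standard congruence reduction: since $\bH(\tau)$ is symmetric and $\bA$ is spd, the eigenvalues of $\bH(\tau)\bA$ are real and equal to those of the symmetric matrix $\bA^{1/2}\bH(\tau)\bA^{1/2}$; the change of variable that turns the Rayleigh quotient of the latter into $\bx \mapsto \bx^\top \bH(\tau)\bx / (\bx^\top \bA^{-1}\bx)$ then shows that every eigenvalue lies between the infimum and the supremum of this ratio over $\bx \neq 0$. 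Hence \eqref{eq:lambdaHA} is equivalent to the operator inequality
\[
L\, \bA^{-1} \preceq \bH(\tau) \preceq U\, \bA^{-1},
\]
and it suffices to prove this. The same reduction applied to $\bH_+(\tau)\bA_+$ turns the already-established \eqref{eq:lambdaH+A+} into $L\, \bA_+^{-1} \preceq \bH_+(\tau) \preceq U\, \bA_+^{-1}$, which is the only information about the GenEO coarse space $V\0(\tau)$ that will be invoked.

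The second step is to exploit the Woodbury formula: by \eqref{eq:Wood}, the (low-rank) correction added to $\bH_+(\tau)$ in the definition of $\bH(\tau)$ is exactly $\bA^{-1} - \bA_+^{-1}$, so that $\bH(\tau) = \bH_+(\tau) + \bA^{-1} - \bA_+^{-1}$ and, for every $\bx \neq 0$,
\[
\frac{\bx^\top \bH(\tau)\bx}{\bx^\top \bA^{-1}\bx} = 1 + \frac{\bx^\top \left(\bH_+(\tau) - \bA_+^{-1}\right)\bx}{\bx^\top \bA^{-1}\bx}.
\]
From $L\, \bA_+^{-1} \preceq \bH_+(\tau) \preceq U\, \bA_+^{-1}$ one gets $(L-1)\,\bA_+^{-1} \preceq \bH_+(\tau) - \bA_+^{-1} \preceq (U-1)\,\bA_+^{-1}$. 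Now $\bA_+ - \bA = \bA_-$ is spsd and both $\bA$, $\bA_+$ are spd, so $\bA \preceq \bA_+$ and therefore $\bA_+^{-1} \preceq \bA^{-1}$; combining this with the signs $U - 1 = \mathcal N_+ \ge 0$ and $L - 1 \le 0$ (the hypothesis $\tau > 1$ being used precisely to ensure $L < 1$), one may replace $\bA_+^{-1}$ by $\bA^{-1}$ at both ends of the chain without reversing either inequality, i.e. $(L-1)\,\bA^{-1} \preceq \bH_+(\tau) - \bA_+^{-1} \preceq (U-1)\,\bA^{-1}$. Dividing the quadratic forms by $\bx^\top \bA^{-1}\bx > 0$ then bounds the ratio above by $1 + (U-1) = U$ and below by $1 + (L-1) = L$, which is exactly \eqref{eq:lambdaHA} (and incidentally the lower bound shows $\bH(\tau)$ is spd).

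The calculation is short; the only delicate point is the sign bookkeeping just used. Passing from $\bA_+^{-1}$ to $\bA^{-1}$ through $\bA_+^{-1} \preceq \bA^{-1}$ weakens the two-sided estimate on $\bH_+(\tau) - \bA_+^{-1}$ in the desired direction only because the upper constant $U - 1$ is nonnegative and the lower constant $L - 1$ is nonpositive. The former holds trivially; the latter is where $\tau > 1$ is genuinely needed, over and above its role inside the $\bH_+(\tau)\bA_+$ theorem: if one had $L > 1$, the same substitution would worsen the lower bound and the interval in \eqref{eq:lambdaHA} would not be recovered as stated. Nothing else about the GenEO construction, the local eigenproblems \eqref{eq:gevp}, or the coloring of the subdomains is needed here.
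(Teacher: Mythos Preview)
Your proof is correct and follows essentially the same route as the paper's: rewrite the spectral bound \eqref{eq:lambdaH+A+} as $L\,\bA_+^{-1}\preceq\bH_+(\tau)\preceq U\,\bA_+^{-1}$, use the Woodbury identity to recognize $\bH(\tau)=\bH_+(\tau)+(\bA^{-1}-\bA_+^{-1})$, and transfer the two-sided estimate using $\bA_+^{-1}\preceq\bA^{-1}$ together with the sign conditions $U\ge 1$ and $L\le 1$. The paper phrases this as ``add the Woodbury correction to all three terms'', while you subtract $\bA_+^{-1}$ first and then replace it by $\bA^{-1}$; these are algebraically the same manipulation. One small remark on your commentary: the inequality $L\le 1$ does not actually hinge on $\tau>1$, since $\mathcal N_+\ge 1$ already forces $L\le 1/(3\tau)\le 1/3$ for any $\tau\ge 1$; the hypothesis $\tau>1$ is inherited from the $\bH_+(\tau)\bA_+$ theorem rather than needed afresh here.
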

\begin{proof}
The estimate for the eigenvalues of $\bH_+(\tau) \bA_+$ in \eqref{eq:lambdaH+A+} is equivalent to 
\[
\left((1 + 2 \mathcal N_+)\tau  \right)^{-1}   \langle \bx, \bA_+^{-1} \bx \rangle \leq \langle \bx, \bH_+(\tau) \bx \rangle \leq (\mathcal N_+ + 1) \langle \bx, \bA_+^{-1} \bx \rangle, \,\forall \bx \in \mathbb R^n .
\]
Adding, $ \langle \bx, \bA_+^{-1} \bV_- \left(\bL_-^{-1} - \bV_-^\top \bA_+^{-1} \bV_-  \right)^{-1} \bV_-^\top \bA_+^{-1} \bx \rangle$ to each term, it holds that
\[
\left((1 + 2 \mathcal N_+)\tau  \right)^{-1}  \langle \bx, \bA^{-1} \bx \rangle \leq \langle \bx, \bH(\tau) \bx \rangle \leq (\mathcal N_+ + 1) \langle \bx, \bA^{-1} \bx\rangle , \,\forall \bx \in \mathbb R^n ,
\]
where \alert{\eqref{eq:Wood}} was applied as well as $\mathcal N_+ \geq 1$ and  $\tau \geq 1$. This is equivalent to \eqref{eq:lambdaHA}.
\end{proof}

\begin{remark}[Cost of the new preconditioner]
\alert{
In order to apply the preconditioner, the matrix $\bA_+^{-1} \bV_-$ must be formed. This can be done by solving iteratively $n_-$ linear systems preconditioned by $\bH_+(\tau)$. It is likely that block Krylov methods would be advantageous. Note that unfortunately $\bA_+^{-1} \bV_-$ is dense as is $\left(\bL_-^{-1} - \bV_-^\top \bA_+^{-1} \bV_-  \right)$. Setting up and applying the second coarse problem $\bA_+^{-1} \bV_- \left(\bL_-^{-1} - \bV_-^\top \bA_+^{-1} \bV_-  \right)^{-1} \bV_-^\top \bA_+^{-1}$ is the most costly part of the algorithm.  

The good news is that the number $n_-$ of columns in $\bV_-$ (which equals the rank of $\bA_-$) satisfies $n_- \leq \sum_{s=1}^N n\s - n$. Consequently, the rank of $\bA_-$ is low compared to the rank $n$ of $\bA$ ($n_- \ll n$) as long as there is little overlap between subdomains. Note that $n_-$ can be (and hopefully is) much smaller even than $\sum_{s=1}^N n\s - n $. 
}
\end{remark}

\section{Numerical Illustration}
\label{sec:Numerical}
The results in this section are obtained using the software FreeFem++ \cite{MR3043640}, GNU Octave \cite{octave} and METIS \cite{METIS}. The linear systems that are considered arise from discretizing with $\mathbb P_1$ finite elements some two-dimensional linear elasticity problems. 

\begin{figure}
\begin{center}
\includegraphics[width=0.3\textwidth]{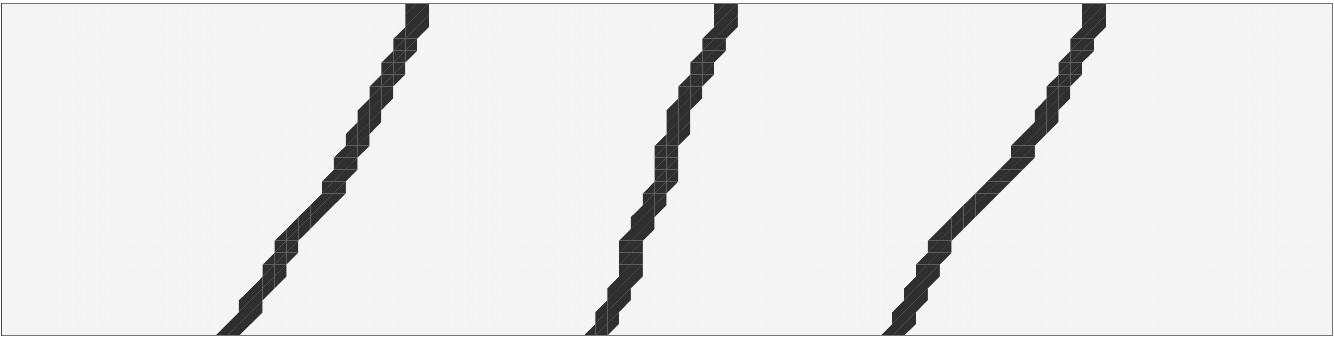}
\includegraphics[width=0.3\textwidth]{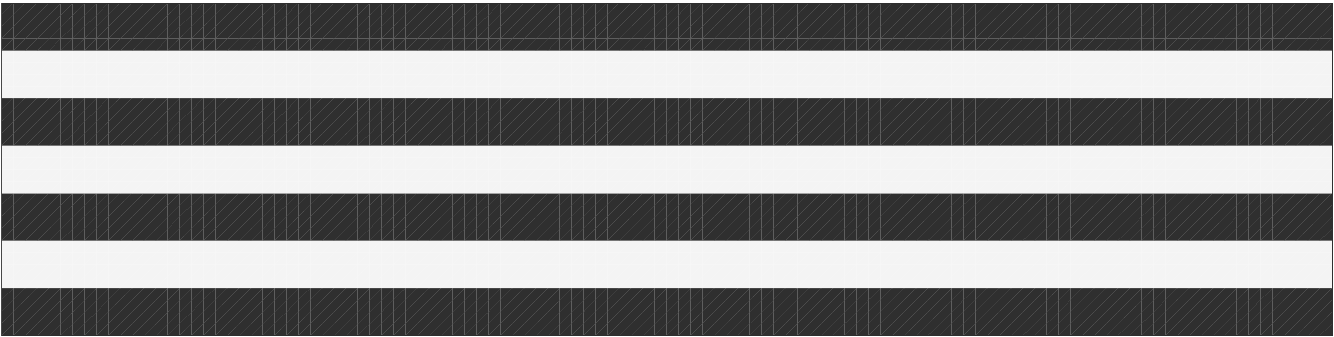}
\end{center}
\caption{Testcase 1 -- \alert{partition ($N=4$) and distribution of $E$ ($10^8$ if white and $10^3$ if dark)}} 
\label{fig:geom1}
\end{figure}

The first test case is posed on the domain $\Omega = [4,1]$ discretized by $112 \times 28$ elements. The problem size is $n = 6496$ degrees of freedom. The coefficients in the linear elasticity equation are $\nu = 0.3$ for Poisson's ratio and 
\[
E(x,y) = 10^8 \text{ if } y \in [1/7,2/7] \cup [3/7,4/7] \cup [5/7, 6/7]; \quad E(x,y) = 10^3 \ \text{ otherwise}.  
\]
The domain is partitioned into 4 subdomains with Metis. No overlap is added. Figure~\ref{fig:geom1} shows both the partition into subdomains and the distribution of $E$. For this problem, the coloring constants with respect to $\bA$ and $\bA_+$ are $\mathcal N = 2$, and $\mathcal N_+ = 3$. The problem is solved with the one-level Additive Schwarz (AS), the two-level AS with the GenEO coarse space from ~\cite{spillane:hal-03186276}[Section 5.2.2] and the new method. The value of the threshold $\tau$ for the last two methods is chosen to be $\tau = 10$. The theoretical bounds for GenEO and the new method is that the eigenvalues are in the interval $[\alert{1/50=0.02} ,3]$ and $[1/70\approx 0.014,4]$, respectively. The $\bA$-norm of the error at each iteration of the preconditioned conjugate gradient is represented in Figure~\ref{fig:cv1}. The quantities of interest are in Table~\ref{tab:testcase1}. The one-level method is not efficient on this problem. This was to be expected. Both the GenEO solver and the new solver converge fast. With $\tau = 10$ in both methods, the coarse space for the new method is larger than with GenEO (58 \textit{versus} 49 coarse vectors). For the new method there is also an additional problem of size 49. 
The results show that the new preconditioner converges a little bit faster than GenEO. A study with more values of all the parameters is needed to compare GenEO and the new solver as the parameter $\tau$ does not play exactly the same role in the setup of both preconditioners. Since there is a lot more information injected into GenEO (through the Neumann matrices), it is expected that GenEO will be more efficient. However the new method has the very significant advantage of being algebraic, and being almost as efficient as GenEO would be an achievement. 
\begin{figure}
\begin{center}
\includegraphics[width=0.8\textwidth]{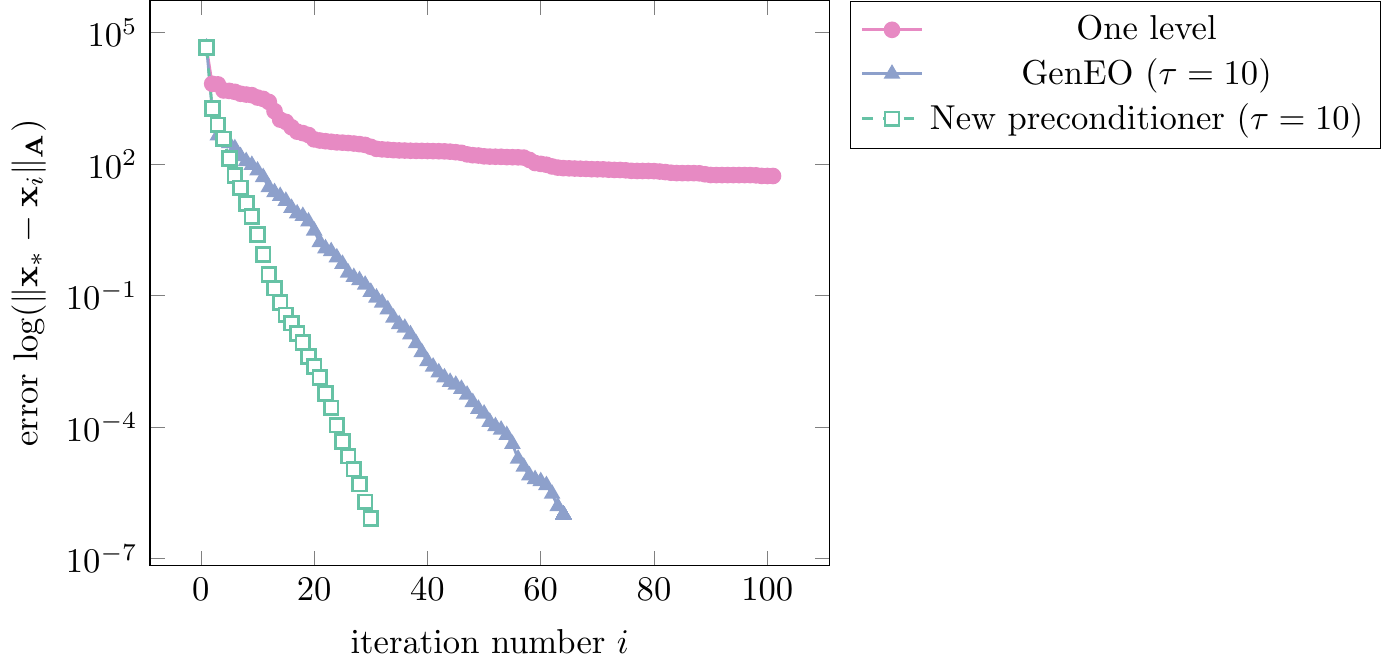}
\end{center}
\caption{Testcase 1 -- Convergence history for the one-level method, the two-level GenEO method and the new method.}
\label{fig:cv1}
\end{figure}

\begin{table}
\begin{center}
\begin{tabular}{c|c|c|c|c|c|c}
  & $\lambda_{\min}$ & $\lambda_{\max}$ & $\kappa$ & It & $\#V^0$ & $n_-$  \\ 
\hline
One-level AS & $2\cdot10^{-4}$ & 2.0 & $1.0\cdot 10^4$ & $>$100 & 0 & 0 \\
Two-level AS with GenEO & 0.059 & 3.0 & 51 & 65 & 49 & 0 \\
New method & 0.24 & 2.93 & 12 & 30 & 58 & 49 \\
\end{tabular}
\end{center}
\caption{\alert{Testcase 1 -- Extreme eigenvalues ($\lambda_{\min}$ and $\lambda_{\max}$), condition number ($\kappa$), iteration count (It), size of coarse space ($\#V^0$), and size of second coarse space in new method ($n_- = \operatorname{rank}(\bA_-)$)}}
\label{tab:testcase1}
\end{table}

It is very good news that the coarse space and the space $V_-$ did not explode on the previous test case. The second test case is a rather easy problem posed on $\Omega = [1,1]$ with a distribution of both coefficients that is homogeneous: $\nu = 0.3$ and $E = 10^8$. Two partitions are considered: one into $N=16$ regular subdomains and the other into $N=16$ subdomains with Metis. No overlap is added to the subdomains. The results are presented in Table~\ref{tab:testcase2}. For the problem with regular subdomains, the new method selects a coarse space of size $44$ (\textit{versus} 40 for GenEO). This means, that even without the knowledge of the Neumann matrix, a coarse space is constructed that has almost the same number of vectors as the optimal coarse space for this problem which consists of $3 \times 12 = 36$ rigid body modes (there are 4 non-floating subdomains). \alert{Of course the second coarse space also adds to the cost.} 
\begin{table}
\begin{center}

\begin{tabular}{c|c|c|c|c|c|c|c|c|c|c|c|c}
 & \multicolumn{6}{c|}{\textbf{$N=16$ regular subdomains}} & \multicolumn{6}{c}{\textbf{$N=16$ subdomains with Metis}} \\ 
  & $\lambda_{\min}$ & $\lambda_{\max}$ & $\kappa$ & It & $\#V^0$ & $n_-$  & $\lambda_{\min}$ & $\lambda_{\max}$ & $\kappa$ & It & $\#V^0$ & $n_-$  \\   
\hline                                                                                                                                                      
One-level AS & $2\cdot10^{-3}$ & 4.0 & 1996 & 97 & 0 & 0     &  $1.7\cdot10^{-3}$ & 3.0 & 1817 & $>$100 & 0 & 0 \\                            
Two-level AS with GenEO & 0.07 & 4.0 & 60 & 61 & 40 & 0      & 0.095 & 3.4 & 36 & 54 & 74 & 0 \\
New method & 0.19 & 4.0 & 21 & 39 & 44 & 24                                  &  0.26 & 3.0 & 11.3 & 31 & 117  & 94                                             
\end{tabular}
\end{center}
\caption{\alert{Testcase 2 -- Extreme eigenvalues ($\lambda_{\min}$ and $\lambda_{\max}$), condition number ($\kappa$), iteration count (It), size of coarse space ($\#V^0$), and size of second coarse space in new method ($n_- = \operatorname{rank}(\bA_-)$)}}
\label{tab:testcase2}
\end{table}

\section{Conclusion}
\label{sec:Conclusion}

\alert{A new algebraic preconditioner was defined for the first time and bounds for the spectrum of the resulting preconditioned operator were proved. They are independent of the number of subdomains and any parameters in the problem. The new preconditioner has two coarse spaces. One of them is dense and a sparse approximation is under investigation. The full length article \cite{AlgebraicGenEO} proposes variants of the new preconditioner that have cheaper choices for $\bH_+$ and less exotic coarse solves.} 

%
%
%
\bibliographystyle{abbrv}
\bibliography{AbstractGenEO}



%
%
%

\end{document}